\numberwithin{equation}{section}
\renewcommand{\subsection}{\@startsection
{subsection}{2}{0mm}{\baselineskip}{-0.25cm}
{\normalfont\normalsize\em}}
\def\gaps{\mathop{\operator@font Gaps}\nolimits}
\def\:={\mathrel{\mathop:}=}
\def\=:{=\mathrel{\mathop:}}
\def\neg1{\text{\boldmath$1$}}
\def\N{\mathds{N}}
\newtheorem{theorem}{Theorem}[section]
\newtheorem{proposition}[theorem]{Proposition}
\newtheorem{corollary}[theorem]{Corollary}
\newtheorem{lemma}[theorem]{Lemma}
\theoremstyle{definition}
\newtheorem{definition}[theorem]{Definition}
\newtheorem{example}[theorem]{Example}
\theoremstyle{remark}
\def\moverlay{\mathpalette\mov@rlay}
\def\mov@rlay#1#2{\leavevmode\vtop{%
   \baselineskip\z@skip \lineskiplimit-\maxdimen
   \ialign{\hfil$\m@th#1##$\hfil\cr#2\crcr}}}
\newcommand{\charfusion}[3][\mathord]{
    #1{\ifx#1\mathop\vphantom{#2}\fi
        \mathpalette\mov@rlay{#2\cr#3}
      }
    \ifx#1\mathop\expandafter\displaylimits\fi}
\renewcommand*\env@matrix[1][*\c@MaxMatrixCols c]{%
  \hskip -\arraycolsep
  \let\@ifnextchar\new@ifnextchar
  \array{#1}}
\begin{document}
    \author[A. S. Castellanos]{A. S. Castellanos}

\author[G. Tizziotti]{G. Tizziotti}

    \title{On Weierstrass semigroup at $m$ points on curves of the type $f(y) = g(x)$}

\maketitle

\begin{abstract}
In this work we determine the so-called minimal generating set of the Weierstrass semigroup of certain $m$ points on curves $\mathcal{X}$ with plane model of the type $f(y) = g(x)$ over $\mathbb{F}_{q}$, where $f(T),g(T)\in \mathbb{F}_q[T]$. Our results were obtained using the concept of discrepancy, for given points $P$ and $Q$ on $\mathcal{X}$. This concept was introduced by Duursma and Park, in \cite{duursma}, and allows us to make a different and more general approach than that used to certain specific curves studied earlier.
\end{abstract}







\section{Introduction}

Let $\mathcal{X}$ be a nonsingular, projective, geometrically irreducible curve of genus $g \geq 1$ defined over a finite field $\mathbb{F}_q$, let $\mathbb{F}_q[\mathcal{X}]$ be the field of rational functions and $Div(\mathcal{X})$ be the set of divisors on $\mathcal{X}$. For $f \in \mathbb{F}_q[\mathcal{X}]$, the divisor of $f$ will be denoted by $(f)$ and the divisor of poles of $f$ by $(f)_{\infty}$. As follows, we denote $\mathbb{N}_{0} = \mathbb{N} \cup \{0\}$, where $\mathbb{N}$ is the set of positive integers. Let $P_{1}, \ldots , P_{m}$ be distinct rational points on $\mathcal{X}$. The set
$$
H(P_{1}, \ldots , P_{m}) = \{(a_{1}, \ldots, a_{m}) \in \mathbb{N}_{0}^ {m} \mbox{ ; } \exists f \in \mathbb{F}_q[\mathcal{X}] \mbox{ with } (f)_{\infty} = \sum_{i=1}^ {m} a_{i}P_{i} \}
$$
is called the \textit{Weierstrass semigroup} at the points $P_{1}, \ldots , P_{m}$. An element in $\mathbb{N}_{0}^{m} \setminus H(P_{1}, \ldots , P_{m})$ is called \textit{gap} and the set $G(P_{1}, \ldots , P_{m}) = \mathbb{N}_{0}^{m} \setminus H(P_{1}, \ldots , P_{m})$ is called \textit{gap set} of $P_{1}, \ldots , P_{m}$.

It is not difficult to see that the set $H(P_{1}, \ldots , P_{m})$ is a semigroup. The case $m=1$ has been studied for decades with its relationship to coding theory, see e.g. \cite{GarciaKimLax}, \cite{vanlint} and \cite{stichtenoth2}. An important fact about this case is that the cardinality of $G(P_1)$ is $g$. The case $m=2$ started to be studied by Kim, in \cite{kim}, where several properties were presented. Others relevant papers in the case $m = 2$ are \cite{ballico}, \cite{homma}, \cite{homma2} and \cite{tizziotti}. For $m>2$, this semigroup has been determined for some specific curves as Hermitian and Norm-trace curves, see \cite{gretchen1} and \cite{gretchen2}. With increasing interest in this semigroup, many results have been produced with several applications, especially in coding theory. Examples of these applications can be found in \cite{carvalho2}, \cite{GarciaKimLax} and \cite{gretchen}.

In this work we study the Weierstrass semigroup $H(P_{1}, \ldots , P_{m})$ for points on curves $\mathcal{X}$ with plane model of the type $f(y) = g(x)$ over $\mathbb{F}_{q}$, where $f(T),g(T)\in \mathbb{F}_q[T]$. Our results were obtained using the concept of discrepancy, for given points $P$ and $Q$ on $\mathcal{X}$, see Definition \ref{defi discrepancy}. This concept was introduced by Duursma and Park in \cite{duursma}, and it was our main tool for obtain the set $\Gamma(P_1,\ldots,P_m)$, called \textit{minimal generating} of $H(P_{1}, \ldots , P_{m})$, see Theorem \ref{maintheorem}. We observe that this approach is different from that used by Matthews in \cite{gretchen1} and Matthews and Peachey in \cite{gretchen2}.

This paper is organized as follows. Section 2 contains general results about Weierstrass semigroup and discrepancy. In Section 3, we determine the minimal generating for the Weierstrass semigroup $H(P_1,\ldots, P_m)$ for the curves $\mathcal{X}$ with plane model of the type $f(y) = g(x)$ cited above. Finally, in Section 4 we present examples for certain specific curves.

\section{Weierstrass semigroup and Discrepancy}

Let $\mathcal{X}$ be a non-singular, projective, irreducible, algebraic curve of genus $g \geq 1$ over a finite field $\mathbb{F}_{q}$.


Fix $m$ distinct rational points $P_1,\ldots,P_m$ on $\mathcal{X}$. Define a partial order $\preceq$ on $\N_0^m$ by $(n_1,\ldots,n_m)\preceq (p_1,\ldots,p_m)$ if and only if $n_i\leq p_i$ for all $i$, $1\leq i\leq m$.

For ${\bf u}_1,\ldots,{\bf u}_t\in \N_0^{m}$, where, for all $k$, ${\bf u}_k = (u_{k_{1}}, \ldots , u_{k_{m}})$, we define the \emph{least upper bound} ($lub$) of the vectors ${\bf u}_1,\ldots,{\bf u}_t$ in the following way: \[lub\{{\bf u}_1,\ldots,{\bf u}_t\}=(\max\{{ u_{1_1}},\ldots,{ u_{t_1}}\},\ldots, \max\{{ u_{1_m}},\ldots,{ u_{t_m}}\} )\in \N_0^{m}.\] For $\mathbf{n}=(n_1,\ldots,n_m)\in \N_0^{m}$ and $i \in \{ 1,\ldots , m\}$, we set
$$
\nabla_i (\mathbf{n}):=\{ (p_1, \ldots , p_m) \in H(P_{1}, \ldots, P_m) \mbox{ ; } p_i=n_i \}.
$$

\begin{proposition}\label{minimal}[\cite{gretchen1}, Proposition 3]
Let $\mathbf{n}=(n_1,\ldots,n_m)\in \N_0^{m}$. Then $\mathbf{n}$ is minimal, with respect to $\preceq$, in $\nabla_i(\mathbf{n})$ for some $i$, $1 \leq i \leq m$, if and only if $\mathbf{n}$ is minimal in $\nabla_i(\mathbf{n})$ for all $i$, $1 \leq i \leq m$.
\end{proposition}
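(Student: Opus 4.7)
My plan is to prove the nontrivial direction (``minimal in some $\nabla_i$'' implies ``minimal in every $\nabla_i$'') by contraposition; the converse is vacuous. Fix $i$ with the assumed minimality, take $j\neq i$, and assume there exists $\mathbf{p}=(p_1,\ldots,p_m)\in H(P_1,\ldots,P_m)$ with $p_j=n_j$, $\mathbf{p}\preceq\mathbf{n}$, and $\mathbf{p}\neq\mathbf{n}$. The target is to exhibit a strict $\preceq$-predecessor of $\mathbf{n}$ inside $\nabla_i(\mathbf{n})$, which contradicts the hypothesis.

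If $p_i=n_i$ then $\mathbf{p}$ itself already does the job, since $\mathbf{p}\neq\mathbf{n}$ forces some coordinate $k$ with $p_k<n_k$; so I may assume $p_i<n_i$. I would then pass to the function field: pick $f,g\in\mathbb{F}_q[\mathcal{X}]$ with $(f)_\infty=\sum_k n_kP_k$ and $(g)_\infty=\sum_k p_kP_k$. Because $P_j$ is rational and $v_{P_j}(f)=v_{P_j}(g)=-n_j$, the leading coefficients $a,b\in\mathbb{F}_q^*$ of the Laurent expansions of $f$ and $g$ in a uniformizer at $P_j$ are well defined and nonzero. Setting $h\:=bf-ag$ yields $v_{P_j}(h)>-n_j$ by design, while at $P_i$ the two summands have distinct valuations $-n_i$ and $-p_i$, so $v_{P_i}(h)=-n_i$. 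For every other index $k$ one has $v_{P_k}(h)\geq-n_k$, and $h$ has no pole outside $\{P_1,\ldots,P_m\}$. Moreover $h\neq0$, since otherwise $f$ would be a scalar multiple of $g$ and they would share a pole divisor, contradicting $p_i<n_i$. Hence $h$ realizes a vector $\mathbf{q}\in H(P_1,\ldots,P_m)$ with $q_i=n_i$, $q_j<n_j$, and $q_k\leq n_k$ for all $k$, so $\mathbf{q}\in\nabla_i(\mathbf{n})$ and $\mathbf{q}\prec\mathbf{n}$, the desired contradiction.

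The main subtlety is killing the $P_j$-pole without diminishing the $P_i$-pole. The cancellation at $P_j$ is possible precisely because $n_j=p_j$ makes the leading terms of $f$ and $g$ proportional there, while the survival of the $P_i$-pole of order $n_i$ rests on the strict inequality $n_i>p_i$ secured in the first reduction, which forces the two summands to have unequal valuations at $P_i$. Rationality of the $P_k$ over $\mathbb{F}_q$ is what keeps the leading coefficients, and hence $h$, inside $\mathbb{F}_q[\mathcal{X}]$.
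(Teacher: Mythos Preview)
The paper itself supplies no proof of this proposition; it simply cites \cite{gretchen1}. Your argument is the standard one and is correct: cancel the common leading term at $P_j$ to drop the pole order there while the strict inequality $p_i<n_i$ keeps the $P_i$-pole intact, producing a strictly smaller element of $\nabla_i(\mathbf{n})$.

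One small point worth tightening. Your line $v_{P_j}(f)=v_{P_j}(g)=-n_j$ tacitly uses $n_j>0$: from $(f)_\infty=\sum_k n_kP_k$ one only gets $v_{P_j}(f)\geq 0$ when $n_j=0$, so the ``leading coefficients at $P_j$'' need not sit at the same level and the cancellation step is not well posed. This is not a flaw in your reasoning so much as in the paper's transcription of the hypothesis: as written with $\mathbf{n}\in\NNo^{m}$ the proposition is actually false. For example, with $m=2$ and any $n_1\in H(P_1)\setminus\{0\}$, the tuple $(n_1,0)$ is minimal in $\nabla_1$ (any $\preceq$-predecessor in $\nabla_1$ would need second coordinate $<0$) but not in $\nabla_2$, since $(0,0)\in\nabla_2((n_1,0))$ and $(0,0)\prec(n_1,0)$. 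The only place the paper uses the proposition is through the definition of $\Gamma(P_1,\ldots,P_m)\subseteq\N^m$, so the intended hypothesis is $\mathbf{n}\in\N^m$, under which your proof goes through verbatim. (Two harmless wording quibbles: the easy direction is \emph{trivial} rather than \emph{vacuous}, and your argument is by contradiction rather than contraposition.)
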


\medskip

\begin{proposition}\label{lubH}[\cite{gretchen1}, Proposition 6]
Suppose that $1 \leq t \leq m \leq q$ and ${\bf u}_1,\ldots,{\bf u}_t\in H(P_{1}, \ldots , P_{m})$. Then $lub\{{\bf u}_1,\ldots,{\bf u}_t\} \in H(P_{1}, \ldots , P_{m})$.
\end{proposition}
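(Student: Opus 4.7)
The plan is to realize each $\mathbf{u}_k$ by an explicit function $f_k \in \mathbb{F}_q[\mathcal{X}]$ and then to form a generic $\mathbb{F}_q$-linear combination whose pole order at every $P_i$ simultaneously equals the coordinatewise maximum of the $u_{k_i}$. First, for each $k = 1, \ldots, t$ pick $f_k \in \mathbb{F}_q[\mathcal{X}]$ with $(f_k)_\infty = \sum_{i=1}^m u_{k_i} P_i$. Set $n_i = \max_{1 \le k \le t} u_{k_i}$, so that $lub\{\mathbf{u}_1, \ldots, \mathbf{u}_t\} = (n_1, \ldots, n_m)$, and put $J_i = \{k : u_{k_i} = n_i\}$. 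Since each $P_i$ is $\mathbb{F}_q$-rational, I would fix a local parameter $\pi_i$ at $P_i$ in $\mathbb{F}_q[\mathcal{X}]$; for $k \in J_i$ the function $\pi_i^{n_i} f_k$ is then a unit in the local ring at $P_i$ whose residue $c_{k,i} \in \mathbb{F}_q^\ast$ is the leading Laurent coefficient of $f_k$ there, while for $k \notin J_i$ the pole order of $f_k$ at $P_i$ is strictly less than $n_i$.

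Next, for $\lambda = (\lambda_1, \ldots, \lambda_t) \in \mathbb{F}_q^t$ I would consider $f_\lambda = \sum_{k=1}^t \lambda_k f_k \in \mathbb{F}_q[\mathcal{X}]$. Its pole divisor is supported on $\{P_1, \ldots, P_m\}$, with order at most $n_i$ at $P_i$, and the order equals $n_i$ exactly when the linear form
$$L_i(\lambda) = \sum_{k \in J_i} \lambda_k c_{k,i}$$
does not vanish. Thus the problem reduces to finding $\lambda \in \mathbb{F}_q^t$ that lies outside every hyperplane $H_i = \ker L_i \subset \mathbb{F}_q^t$, for then $(f_\lambda)_\infty = \sum_i n_i P_i = lub\{\mathbf{u}_1, \ldots, \mathbf{u}_t\}$ and we are done.

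The crux of the argument is the counting. Since every $H_i$ contains the origin,
$$\Big| \bigcup_{i=1}^m H_i \Big| \le 1 + m(q^{t-1} - 1).$$
Using that $q^t - 1 - q(q^{t-1} - 1) = q - 1 \ge 1$, the hypothesis $m \le q$ yields $m(q^{t-1} - 1) \le q(q^{t-1} - 1) < q^t - 1$, so $|\bigcup_i H_i| < q^t$ and a suitable $\lambda$ exists.

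The main obstacle I expect is precisely this tightness: the cruder bound $|\bigcup_i H_i| \le m q^{t-1}$ only handles $m < q$, so one must exploit that all the hyperplanes $H_i$ share the origin in order to squeeze out the extra factor needed to make the boundary case $m = q$ go through.
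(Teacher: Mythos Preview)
The paper does not give its own proof of this proposition; it is simply quoted from Matthews (\cite{gretchen1}, Proposition~6), so there is no in-paper argument to compare against. Your proof is correct and is the natural approach: realize each $\mathbf{u}_k$ by a function $f_k$, form a generic $\mathbb{F}_q$-linear combination, and observe that the bad loci (where the top-order Laurent terms at some $P_i$ cancel) are at most $m$ hyperplanes through the origin in $\mathbb{F}_q^{\,t}$, whose union has size at most $1 + m(q^{t-1}-1) < q^t$ once $m \le q$. One cosmetic remark: if some $n_i = 0$ then every $f_k$ is already regular at $P_i$ and no linear condition is needed there, so that index should simply be omitted from the list of hyperplanes (which only improves the count); with this trivial adjustment your argument is complete.
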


\medskip

\begin{definition}
Let $\Gamma(P_{1})=H(P_1)$ and, for $m\geq 2$, define
$$
\Gamma(P_{1}, \ldots, P_{m}):=\{{\bf n}\in \N^m: \mbox{ for some } i, 1\leq i\leq m, {\bf n} \mbox{ is minimal in } \nabla_i ({\bf n})\}.
$$
\end{definition}

\medskip

\begin{lemma}\label{mPoints} [\cite{gretchen1}, Lemma 4]
For $m \geq 2$, $\Gamma(P_{1}, \ldots, P_{m}) \subseteq G(P_1)\times\cdots\times G(P_m)\;.$
\end{lemma}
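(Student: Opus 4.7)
The plan is to take an arbitrary $\mathbf{n}=(n_1,\ldots,n_m)\in \Gamma(P_1,\ldots,P_m)$ and show that $n_j\in G(P_j)$ for every $j\in\{1,\ldots,m\}$ by a direct contradiction argument.

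First I would unpack the definition: since $\mathbf{n}\in\Gamma(P_1,\ldots,P_m)$, there is some index $i$ for which $\mathbf{n}$ is minimal in $\nabla_i(\mathbf{n})$. In particular this forces $\mathbf{n}\in \nabla_i(\mathbf{n})\subseteq H(P_1,\ldots,P_m)$. Invoking Proposition \ref{minimal} immediately upgrades this to: $\mathbf{n}$ is minimal in $\nabla_j(\mathbf{n})$ for \emph{every} $j$ with $1\leq j\leq m$. I would also record the fact that, by definition of $\Gamma$, $\mathbf{n}\in \N^m$, so $n_k\geq 1$ for all $k$; together with the hypothesis $m\geq 2$, this guarantees that for each fixed $j$ there exists at least one index $k\neq j$ with $n_k\geq 1$.

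Fix $j\in\{1,\ldots,m\}$ and suppose, for contradiction, that $n_j\in H(P_j)$. Then there is a rational function $f\in \mathbb{F}_q[\mathcal{X}]$ with $(f)_\infty = n_j P_j$. This function witnesses that the vector
\[
\mathbf{v} := (0,\ldots,0,\underbrace{n_j}_{j\text{-th}},0,\ldots,0)
\]
belongs to $H(P_1,\ldots,P_m)$, and since its $j$-th coordinate equals $n_j$, in fact $\mathbf{v}\in \nabla_j(\mathbf{n})$. Comparing coordinates, $v_j=n_j$ and $v_k=0\leq n_k$ for all $k\neq j$, so $\mathbf{v}\preceq \mathbf{n}$; moreover, by the observation above, there is some $k\neq j$ with $n_k\geq 1 > 0 = v_k$, so $\mathbf{v}\neq \mathbf{n}$, giving $\mathbf{v}\prec \mathbf{n}$ strictly. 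This contradicts the minimality of $\mathbf{n}$ in $\nabla_j(\mathbf{n})$ which we established via Proposition \ref{minimal}. Hence $n_j\in G(P_j)$, and since $j$ was arbitrary, $\mathbf{n}\in G(P_1)\times\cdots\times G(P_m)$.

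There is no real obstacle here: the argument is a one-line consequence of Proposition \ref{minimal} combined with the observation that a pole divisor supported only at $P_j$ produces a vector of $H(P_1,\ldots,P_m)$ that is coordinate-wise below any element of $\N^m$ sharing its $j$-th coordinate. The only small point that demands care is ensuring $\mathbf{v}\neq \mathbf{n}$, which is exactly where the hypotheses $\mathbf{n}\in \N^m$ and $m\geq 2$ are used; without $m\geq 2$ there would be no other coordinate to separate $\mathbf{v}$ from $\mathbf{n}$, which matches the way $\Gamma$ is defined separately for $m=1$.
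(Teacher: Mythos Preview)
Your argument is correct. The paper does not give its own proof of this lemma; it simply quotes it from \cite{gretchen1}, so there is nothing to compare against, but your approach---using Proposition~\ref{minimal} to get minimality in every $\nabla_j(\mathbf{n})$ and then contradicting it with the vector $(0,\ldots,n_j,\ldots,0)$ coming from a one-point pole function---is exactly the standard proof and uses the hypotheses $\mathbf{n}\in\N^m$ and $m\geq 2$ in the right place.
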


\medskip

In \cite{gretchen1}, Theorem 7, it is shown that, if $2\leq m \leq q$, then $H(P_1,\ldots,P_m)= $

\[ \left\{\begin{array}{cl} lub\{{\bf u}_1,\ldots,{\bf u}_m\}\in \N_0^m: & {\bf u}_i \in \Gamma(P_{1}, \ldots, P_{m}) \\& \mbox{ or } ( u_{i_1}, \ldots, u_{i_k}) \in \Gamma(P_{i_1}, \ldots , P_{i_k}) \\ & \mbox{ for some } \{i_1,\ldots,i_k\}\subset\{1,\ldots,m\}  \mbox{ such that } \\ & i_1<\cdots<i_k \mbox{ and } u_{i_{k+1}} = \cdots = u_{i_\ell}=0, \\ & \mbox{ where }  \{i_{k+1},\ldots,i_m\}\subset\{1,\ldots,\ell\} \setminus \{i_1,\ldots,i_k\} \end{array}\right\}\;.\]

Therefore, the Weierstrass semigroup $H(P_1,\ldots,P_m)$ is completely determined by $\Gamma(P_1,\ldots,P_m)$. In \cite{gretchen1}, Matthews called the set $\Gamma(P_1,\ldots,P_m)$ of \emph{minimal generating} of $H(P_1,\ldots,P_m)$.

In \cite[Section 5]{duursma}, Duursma and Park introduced the concept of discrepancy as follows.

\begin{definition}\label{defi discrepancy}
A divisor $A \in Div(\mathcal{X})$ is called a \textit{discrepancy} for two rational points $P$ and $Q$ on $\mathcal{X}$ if $\mathcal{L}(A)\neq \mathcal{L}(A-P)=\mathcal{L}(A-P-Q)$ and $\mathcal{L}(A)\neq \mathcal{L}(A-Q)=\mathcal{L}(A-P-Q)$.
\end{definition}

The next result relates the concept of discrepancy with the set $\Gamma(P_1,\ldots,P_m)$.

\begin{lemma} \label{lemma discrepancy}
Let ${\bf n}=(n_1,\ldots,n_{m})\in H(P_1,\ldots,P_m)$. Then ${\bf n}\in \Gamma(P_1,\ldots,P_m)$ if and only if the divisor $A=n_1P_1+\cdots + n_m P_m$ is a discrepancy with respect to $P$ and $Q$ for any two rational points $P,Q\in \{P_1,\ldots,P_m\}$.
\end{lemma}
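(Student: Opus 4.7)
The plan is to translate both sides of the equivalence into statements about the Riemann--Roch spaces $\mathcal{L}(A - \sum \varepsilon_k P_k)$ and then exploit the standard correspondence: $\mathcal{L}(D) \neq \mathcal{L}(D-R)$ if and only if some function in $\mathcal{L}(D)$ has pole order at the point $R$ equal to the coefficient of $R$ in $D$. The existence of a function $f$ with $(f)_\infty = A$, guaranteed by ${\bf n} \in H(P_1,\ldots,P_m)$, immediately supplies the ``$\neq$'' parts of the discrepancy definition, so the real content of both implications lies in the two equalities $\mathcal{L}(A-P) = \mathcal{L}(A-P-Q)$ and $\mathcal{L}(A-Q) = \mathcal{L}(A-P-Q)$.

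For the forward implication, I would suppose ${\bf n} \in \Gamma(P_1,\ldots,P_m)$, so that by Proposition \ref{minimal} the vector ${\bf n}$ is minimal in every $\nabla_i({\bf n})$. Fixing indices $i \neq j$ and setting $P = P_i$, $Q = P_j$, I would argue by contradiction: if $\mathcal{L}(A-P) \neq \mathcal{L}(A-P-Q)$, pick any $g$ in the difference. A pole-order comparison of the two spaces forces $\operatorname{ord}_{Q}(g) = -n_j$, while $\operatorname{ord}_{P_i}(g) \geq -(n_i-1)$, $\operatorname{ord}_{P_k}(g) \geq -n_k$ for $k \neq i,j$, and $g$ has no further poles. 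Reading off $(g)_\infty$ then produces a vector in $\nabla_j({\bf n})$ strictly below ${\bf n}$, contradicting minimality. The second equality $\mathcal{L}(A-Q) = \mathcal{L}(A-P-Q)$ follows by swapping the roles of $P$ and $Q$.

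For the reverse implication, I would assume $A$ is a discrepancy for every pair $(P_i,P_j)$ and suppose, for contradiction, that ${\bf n}$ is not minimal in $\nabla_i({\bf n})$ for some $i$. Then there exists ${\bf p} = (p_1,\ldots,p_m) \in H(P_1,\ldots,P_m)$ with $p_i = n_i$ and ${\bf p} \prec {\bf n}$; choose $j \neq i$ with $p_j < n_j$. A function $h$ realizing $(h)_\infty = \sum_k p_k P_k$ lies in $\mathcal{L}(A-P_j)$, because $p_k \leq n_k$ for $k \neq j$ and $p_j \leq n_j - 1$, but does not lie in $\mathcal{L}(A - P_i - P_j)$, because $\operatorname{ord}_{P_i}(h) = -n_i$ violates the required bound $-(n_i-1)$. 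This directly contradicts the discrepancy equality $\mathcal{L}(A-P_j) = \mathcal{L}(A - P_i - P_j)$.

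The hard part is not conceptual but a matter of careful bookkeeping: one must keep precise track of pole orders and apply the identifications between nested Riemann--Roch subspaces in the correct direction. A minor subsidiary point is that in the reverse implication one should verify ${\bf n} \in \N^m$ rather than merely $\N_0^m$; if some $n_i = 0$ while ${\bf n} \neq {\bf 0}$, then the constant function $1$ already witnesses $\mathcal{L}(A-P_j) \neq \mathcal{L}(A-P_i-P_j)$ for any $j$ with $n_j \geq 1$, contradicting the discrepancy hypothesis.
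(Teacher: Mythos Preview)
Your proposal is correct and follows essentially the same route as the paper's proof: in both directions you translate membership in $\Gamma(P_1,\ldots,P_m)$ into the minimality condition (via Proposition~\ref{minimal}), produce the ``$\neq$'' clauses of the discrepancy from the function $f$ with $(f)_\infty=A$, and obtain the ``$=$'' clauses by a contradiction argument that reads off the pole divisor of a hypothetical $g\in\mathcal{L}(A-P_i)\setminus\mathcal{L}(A-P_i-P_j)$ to manufacture a smaller element of $\nabla_j(\mathbf{n})$. Your closing remark on checking $\mathbf{n}\in\N^m$ (not merely $\N_0^m$) in the reverse implication is a useful point that the paper's proof leaves implicit.
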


\begin{proof}
Let ${\bf n}=(n_1,\ldots,n_{m})\in \Gamma(P_1,\ldots,P_m)$, then there is a rational function $f\in \mathbb{F}_q(\mathcal{X})$ such that $(f)_\infty=A=n_1P_1+\cdots + n_m P_m$. Let $P_i,P_j\in \{P_1,\ldots,P_m\}$, with $i\neq j$. Suppose that $\mathcal{L}(A)=\mathcal{L}(A-P_i)$. Then we have that $f\in \mathcal{L}(A-P_i)$ and so $(f)+A-P_i\succeq 0$, contradiction because the pole divisor of $f$ is $A$. Therefore $\mathcal{L}(A) \neq\mathcal{L}(A-P_i)$. Now, suppose that $\mathcal{L}(A-P_i)\neq \mathcal{L}(A-P_i-P_j)$. Then there is a rational function $g\in \mathcal{L}(A-P_i)\setminus \mathcal{L}(A-P_i-P_j)$. So $(g)+A-P_i\succeq 0$ and this implies that $(g)_\infty= s_1 P_1 + \ldots + s_{j-1}P_{j-1} + n_j P_j + s_{j+1}P_{j+1} +  \ldots + s_{m} P_{m}$, with $s_i\leq n_i-1$ and $s_k\leq n_k$, for $k \in \{ 1, \ldots,m\} \setminus \{i,j\}$. It follows that $(s_1,\ldots,s_{j-1}, n_j, s_{j+1}, \ldots,s_m)\in \triangledown_j({\bf n})$, this contradicts the fact that ${\bf n}\in \Gamma(P_1,\ldots,P_m)$ is minimal in $\triangledown_j({\bf n})$. Similar considerations apply when initializing with  $P_j$. Therefore, $A=n_1P_1+\cdots + n_m P_m$ is a discrepancy with respect to $P$ and $Q$ for any two rational points $P,Q\in \{P_1,\ldots,P_m\}$. Conversely, suppose that ${\bf n}=(n_1,\ldots,n_{m})\in H(P_1,\ldots,P_m)$ and that the divisor $A=n_1P_1+\cdots + n_m P_m$ is a discrepancy with respect to any $P_i,P_j\in \{P_1,\ldots, P_m\}$, with $i\neq j$. Since ${\bf n}=(n_1,\ldots,n_{m})\in H(P_1,\ldots,P_m)$, there is a rational function $f\in \mathbb{F}_q(\mathcal{X})$ such that $(f)_\infty=n_1P_1+\cdots + n_m P_m$. Suppose that ${\bf n}=(n_1,\ldots,n_m)$ is not minimal in $\triangledown_i({\bf n})$ for some $i \in \{1,\ldots,m\}$. Then, there exist a rational function $f_i \in \mathbb{F}_q(\mathcal{X})$ such that $(f)_\infty=s_1P_1+\cdots+s_m P_m$ with ${\bf s}=(s_1,\ldots,s_m)\in \triangledown_i({\bf n})$, ${\bf s}\neq {\bf n}$ and ${\bf s}\preceq {\bf n}$. Therefore, $s_i=n_i$ and $s_j<n_j$ for some $j\neq i$. It follows that $f_i\in \mathcal{L}(A-P_j)$ and $f_i\not\in \mathcal{L}(A-P_i)$, this contradicts the fact that $\mathcal{L}(A-P_j)=\mathcal{L}(A-P_i)$.
\end{proof}

\section{Weierstrass semigroup $H(P_1,\ldots,P_m)$ for certain types of curves} Consider a curve $\mathcal{X}$ over $\mathbb{F}_{q}$ given by affine equation $f(y) = g(x)$ ,where $f(T), g(T)\in \mathbb{F}_q[T]$, $\deg(f(y))=a$ and $\deg(g(x))=b$, with $gdc(a,b)=1$. Suppose that $\mathcal{X}$ is absolutely irreducible with genus $g=(a-1)(b-1)/2$.

Let $P_1, P_2, \ldots, P_{a+1}$ be $a+1$ distinct rational points such that

\begin{equation}\label{eq1}
a P_1 \sim P_2 + \cdots + P_{a+1}\,,
\end{equation}

and

\begin{equation}\label{eq2}
b P_i \sim b P_j \mbox{, for all } i,j\in \{1,2,\ldots,a+1\},
\end{equation}
where \lq\lq $\sim$" represent the equivalence of divisors. Note that $H(P_1)= \langle a, b \rangle$.

Examples of such curves can be found in \cite{kummer}, \cite{kondo}, \cite{gretchen1} and \cite{gretchen2}.

Let $1 \leq m \leq a+1 \leq q$. For

\begin{equation}\label{eq3}
t + \sum_{j=2}^m s_j = a+1-m\;,
\end{equation}

we have that the equivalences (\ref{eq1}) and (\ref{eq2}) yield

\begin{equation}\label{eq4}
(tb-ia) P_1 + \sum_{j=2}^m (s_j b + i) P_j \sim \sum_{j=m+1}^{a+1} (b-i) P_j\,.
\end{equation}

Observe that all coefficients are positive for
\begin{equation}\label{eq5}
0 < ia < tb, \qquad  s_j \geq 0\;.
\end{equation}

Note that $0<t \leq a$ and thus $0<i < b$. 

From the basic equivalence (\ref{eq1}) and (\ref{eq2}) we have that
\begin{equation}\label{eq6}
A=(b-i)(aP_1-P_2-\cdots -P_m)\sim\sum_{j=m+1}^{a+1} (b-i) P_j\;.
\end{equation}

By (\ref{eq2}) the divisor on the left can be replaced with an efficient equivalent divisor of the follow form
\begin{equation}\label{eq7}
A'=((a+1-m)b-ia)P_1+i(P_2+\cdots+P_m)\sim \sum_{j=m+1}^{a+1} (b-i) P_j\;.
\end{equation}

Thus, by redistributing $a+1-m=s+t$ over $P_1$ and $P_2$, the divisors in (\ref{eq4}), with $s=\sum_{j=2}^m s_j$, contain the special representative
\begin{equation}\label{eq8}
(tb-ia)P_1+(sb+i)P_2+i(P_3+\cdots+P_m)\sim \sum_{j=m+1}^{a+1} (b-i) P_j\;.
\end{equation}

The other divisors with same $t,s$ and $i$ are easy obtained from (\ref{eq8}) by distributing $s$ in all possible ways over $P_2,\ldots,P_{m+1}$ such that $s=\sum_{j=2}^m s_j$.


\begin{proposition} \label{prop discrepancy}
Let $b$ and $i$ be as above. Then, the divisor $(b-i)(aP_1-P_2-\cdots-P_m)$ is a discrepancy with respect to $P$ and $Q$ for any two distinct points $P,Q\in\{P_1,\ldots,P_m\}$.
\end{proposition}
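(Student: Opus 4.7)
The discrepancy condition breaks into two clauses, to be verified for every pair of distinct points $P,Q\in\{P_1,\ldots,P_m\}$:
\begin{enumerate}[(i)]
\item $\mathcal{L}(A)\neq \mathcal{L}(A-P)$ (and symmetrically at $Q$);
\item $\mathcal{L}(A-P)=\mathcal{L}(A-P-Q)$.
\end{enumerate}

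For (i), I would produce a single explicit witness. By the basic equivalence (\ref{eq1}) there exists $\phi\in\mathbb{F}_q(\mathcal{X})$ with $(\phi)=\sum_{j=2}^{a+1}P_j-aP_1$, and therefore
\[
(\phi^{b-i})+A=(b-i)\sum_{j=m+1}^{a+1}P_j\succeq 0,
\]
so $\phi^{b-i}\in\mathcal{L}(A)$. Moreover $\phi^{b-i}$ realises the extremal valuation permitted by $A$ at each $P_k$ with $1\le k\le m$: a pole of exact order $a(b-i)$ at $P_1$ and a zero of exact order $b-i$ at each $P_k$ for $2\le k\le m$. Hence $\phi^{b-i}\notin\mathcal{L}(A-P_k)$ for every $k\in\{1,\ldots,m\}$, which gives (i) uniformly.

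For (ii), my strategy is to transport the problem along $A\sim W:=\sum_{j=m+1}^{a+1}(b-i)P_j$ via the function $\phi^{b-i}$, whose divisor is $W-A$. Multiplication by $\phi^{b-i}$ gives $\mathbb{F}_q$-linear isomorphisms $\mathcal{L}(A-D)\xrightarrow{\sim}\mathcal{L}(W-D)$ for every divisor $D$, so (ii) becomes the assertion that every $f\in\mathcal{L}(W)$ with $v_P(f)\ge 1$ also satisfies $v_Q(f)\ge 1$ for all $Q\in\{P_1,\ldots,P_m\}$ (i.e.\ a \emph{propagation of vanishing} across $\{P_1,\ldots,P_m\}$). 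Since $W$ is supported in $\{P_{m+1},\ldots,P_{a+1}\}$, such an $f$ has $(f)_\infty$ supported in $\{P_{m+1},\ldots,P_{a+1}\}$ with each coefficient at most $b-i$, and $(f)_0$ contains $P$ but not $Q$.

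The heart of the argument, and the step I expect to be the main obstacle, is ruling out this intermediate behaviour. My plan is to combine the alternative effective representative $A'$ from (\ref{eq7}) with Proposition~\ref{lubH}: if such an $f$ existed, the pole divisor of $f\phi^{b-i}$ (which equals $(f)_\infty$ shifted through (\ref{eq1})) would yield, via the $lub$ operation, an element of $H(P_1,\ldots,P_m)$ strictly dominated by the semigroup element $((a{+}1{-}m)b-ia,\,i,\ldots,i)$ attached to $A'$, yet agreeing with it in some coordinate. This would contradict Lemma~\ref{lemma discrepancy} applied in the opposite direction, or more directly, the numerical constraints $0<i<b$ and $0<t\le a$ from (\ref{eq5}), which leave no arithmetic room for a smaller effective divisor in the linear equivalence class with support in $\{P_1,\ldots,P_m\}$. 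Turning that numerical tightness into a clean structural contradiction — using only the equivalences (\ref{eq1}), (\ref{eq2}) and the closure of $H(P_1,\ldots,P_m)$ under $lub$, without appealing to any specific geometric feature of the curve — is the delicate part of the proof.
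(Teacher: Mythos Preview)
Your treatment of clause (i) is fine and matches the paper's: the function $\phi^{b-i}$ (equivalently, working with the effective representative $A'$ and the function realising equivalence (\ref{eq7})) witnesses $\mathcal L(A)\neq\mathcal L(A-P)$ for every $P\in\{P_1,\ldots,P_m\}$.

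The gap is in clause (ii). Your plan transports the question to $W=\sum_{j=m+1}^{a+1}(b-i)P_j$ and then tries to rule out an $f\in\mathcal L(W-P)\setminus\mathcal L(W-P-Q)$ using only the semigroup machinery ($lub$, Lemma~\ref{lemma discrepancy}, and the numerical bounds in (\ref{eq5})). This does not close. First, invoking Lemma~\ref{lemma discrepancy} ``in the opposite direction'' would require already knowing that the tuple $((a{+}1{-}m)b-ia,i,\ldots,i)$ lies in $\Gamma(P_1,\ldots,P_m)$, which is precisely what the proposition is meant to feed into---so that route is circular. Second, the function $f\phi^{b-i}$ you propose has pole divisor supported \emph{only} at $P_1$ (the zeros of $\phi^{b-i}$ at $P_{m+1},\ldots,P_{a+1}$ absorb the poles of $f$, and at $P_2,\ldots,P_m$ it has zeros of order $\ge b-i$), so it produces an element of the form $(n_1,0,\ldots,0)$; this neither dominates nor is dominated by $((a{+}1{-}m)b-ia,i,\ldots,i)$ in a way that $lub$ or minimality can exploit. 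The vague appeal to ``numerical tightness'' does not supply the missing contradiction.

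What the paper does instead is the key idea you are missing: apply Riemann--Roch duality. The equality $\mathcal L(A'-P)=\mathcal L(A'-P-Q)$ is \emph{equivalent} to the strict inequality $\mathcal L(K+P+Q-A')\neq\mathcal L(K+P-A')$, and an inequality can be certified by a single explicit function. With the canonical divisor $K=(ab-a-b-1)P_1$, the paper writes down such a witness as a monomial in the basic functions coming from (\ref{eq1}) and (\ref{eq2}) (namely $f_2^{\,i-1}f_3\cdots f_m$, with the obvious modification when $P_1\notin\{P,Q\}$). This converts the hard ``no intermediate function exists'' statement into an easy ``here is a function'' statement, and is what makes the proof go through.
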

\begin{proof}
It suffices to prove the claim for the equivalent but effective divisor
$A' = ((a+1-m)b-ia) P_1 + i (P_2 + \cdots + P_m).$ The divisors $A$ and $A'$ appear on the left side of the
divisor equivalences (\ref{eq6}) and (\ref{eq7}), respectively. The equivalence of effective divisors in (\ref{eq7}) gives a rational
function $f \in L(A')$ with pole divisor equal to $A'$. Thus $L(A') \neq L(A'-P)$ for any point $P$.
To prove that $L(A'-P) = L(A'-P-Q)$ we consider the equivalent statement $L(K+P+Q-A') \neq L(K+P-A')$.
For the choice of canonical divisor $K=(ab-a-b-1)P_1$,
$$
\begin{array}{cl}
K+P+Q-A' &= (ab-a-b-1)P_1 + P + Q - (a+1-m)b-ia) P_1\\
                 &  - i (P_2 + \cdots + P_m) \\
        &= (i-1)a+(m-2)b-1)P_1 + P + Q - i (P_2 + \cdots + P_m).
\end{array}
$$

Consider first the case $P_1 \in \{ P, Q \}$. Without loss of generality we may assume that $P=P_1,Q=P_2$.
Let $f_2, \ldots, f_m$ be the functions with divisors
\[
(f_j) = \left\{ \begin{array}{ll} -a P_1 + (P_2 + \cdots + P_{a+1}), & j=2 \\
                      b ( P_j - P_1), & j > 2 \end{array} \right .
\]

Then $f_2^{i-1} f_3 \cdots f_m \in L(K+P+Q-A') \backslash L(K+Q-A').$ Thus $L(A'-Q) = L(A'-P-Q)$.
From $L(A') \neq L(A'-Q) = L(A'-P-Q)$ and $L(A') \neq L(A'-P)$ it follows that
also $L(A'-P) = L(A'-P-Q).$
Consider next the case $P_1 \not \in \{ P, Q \}$, say $P=P_2, Q=P_3.$ Thus, we have that $f_2^{i-1} f_4 \cdots f_m \in L(K+P+Q-A') \backslash L(K+Q-A').$
As before, it follows that $L(A'-Q) = L(A'-P-Q)$ and that $L(A'-P) = L(A'-P-Q).$
\end{proof}

\begin{corollary} \label{corolary discrepancy}
Let $a$, $b$, $t$, $i$, $s_2, \ldots , s_m$ be as above. Then, the divisor $(tb-ia) P_1 + \sum_{j=2}^m (s_j b + i) P_j$ is a discrepancy with respect to $P$ and $Q$ for any two distinct points $P,Q\in\{P_1,\ldots,P_m\}$.
\end{corollary}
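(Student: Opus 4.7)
The plan is to reduce the corollary to Proposition~\ref{prop discrepancy} by exploiting the linear equivalences $(\ref{eq4})$--$(\ref{eq8})$ established just before it. Let $D = (tb-ia)P_1 + \sum_{j=2}^m (s_j b + i)P_j$ denote the divisor in the statement. By $(\ref{eq4})$ one has $D \sim \sum_{j=m+1}^{a+1}(b-i)P_j$, and by $(\ref{eq7})$ the effective divisor $A' = ((a+1-m)b - ia)P_1 + i(P_2 + \cdots + P_m)$ is also equivalent to $\sum_{j=m+1}^{a+1}(b-i)P_j$. Transitivity of linear equivalence therefore gives $D \sim A'$.

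Next I would record the simple observation that being a discrepancy is an invariant of the divisor class. Concretely, for any divisor $B$ and points $P,Q$, the discrepancy condition $\mathcal{L}(B)\neq \mathcal{L}(B-P)=\mathcal{L}(B-P-Q)$ is equivalent to the pair of equalities $\dim\mathcal{L}(B) = \dim\mathcal{L}(B-P)+1$ and $\dim\mathcal{L}(B-P) = \dim\mathcal{L}(B-P-Q)$, and these dimensions depend only on the linear equivalence classes of $B$, $B-P$, and $B-P-Q$. Since $D\sim A'$ forces $D-P\sim A'-P$ and $D-P-Q\sim A'-P-Q$, the divisor $D$ is a discrepancy for $(P,Q)$ exactly when $A'$ is.

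The conclusion is then immediate from Proposition~\ref{prop discrepancy}, which asserts precisely that $A'$ (equivalently, the divisor $(b-i)(aP_1 - P_2 - \cdots - P_m)$) is a discrepancy for every pair of distinct points in $\{P_1,\ldots,P_m\}$. There is no substantial obstacle in this argument: the content has already been carried out in the proof of the proposition, and the only point to verify here is the equivalence $D\sim A'$, which is a direct consequence of $(\ref{eq1})$ and $(\ref{eq2})$ as recorded in $(\ref{eq4})$ and $(\ref{eq7})$. An alternative, slightly more computational route would be to transport the auxiliary functions $f_2,\ldots,f_m$ from the proof of Proposition~\ref{prop discrepancy} by multiplying by a suitable rational function with divisor $A'-D$, but invoking class invariance is cleaner.
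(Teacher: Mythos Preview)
Your proof is correct and follows essentially the same route as the paper, which simply notes that the corollary is immediate from Proposition~\ref{prop discrepancy} together with the equivalences $(\ref{eq4})$ and $(\ref{eq8})$. Your version is more explicit, in particular by spelling out that the discrepancy condition depends only on the divisor class, but the underlying argument is the same.
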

\begin{proof}
Follows directly from the previous proposition and equations (\ref{eq4}) and (\ref{eq8}).
\end{proof}

\begin{theorem}\label{maintheorem}
Let $\mathcal{X}$ and $P_1, P_2, \ldots, P_{a+1}$ be as above. For $2 \leq m \leq a+1$, let
$$
\displaystyle S_m=\left\{  (tb-ia, s_2b+i, \ldots , s_m b + i) \mbox{; } t+\sum_{j=2}^{m}s_j=a+1-m \mbox{, } 0<ia<tb \mbox{, } s_j\geq 0 \right\}.
$$
Then, $\Gamma(P_1,\ldots,P_{m}) = S_m$.
\end{theorem}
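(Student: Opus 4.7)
The plan is to establish $\Gamma(P_1,\ldots,P_m)=S_m$ by proving the two inclusions $S_m\subseteq\Gamma(P_1,\ldots,P_m)$ and $\Gamma(P_1,\ldots,P_m)\subseteq S_m$ separately.

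For the inclusion $S_m\subseteq\Gamma(P_1,\ldots,P_m)$, I would fix an element $(tb-ia,\,s_2b+i,\ldots,s_mb+i)\in S_m$ and observe that equation (\ref{eq4}) exhibits an equivalence between the divisor $A=(tb-ia)P_1+\sum_{j=2}^m(s_jb+i)P_j$ and the effective divisor $\sum_{j=m+1}^{a+1}(b-i)P_j$. Because the constraints $0<ia<tb$ and $s_j\ge 0$ make every coefficient on the left positive, this equivalence supplies a rational function whose pole divisor equals $A$ exactly, so the tuple lies in $H(P_1,\ldots,P_m)$. Corollary \ref{corolary discrepancy} asserts that $A$ is a discrepancy with respect to any two distinct points in $\{P_1,\ldots,P_m\}$, and Lemma \ref{lemma discrepancy} then places the tuple in $\Gamma(P_1,\ldots,P_m)$.

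For the reverse inclusion $\Gamma(P_1,\ldots,P_m)\subseteq S_m$, I would take $\mathbf{n}=(n_1,\ldots,n_m)\in\Gamma(P_1,\ldots,P_m)$ and first read off the parameters $t$ and $i$ from $n_1$. By Lemma \ref{mPoints} each $n_j$ is a gap of $H(P_j)$; in particular $n_1\in G(P_1)$, and since $H(P_1)=\langle a,b\rangle$ with $\gcd(a,b)=1$, the gap $n_1$ has a \emph{unique} representation $n_1=tb-ia$ with $t>0$, $0<i<b$ and $ia<tb$. This pins down $t$ and $i$. To identify the remaining coordinates, I would analyse the function $f$ with $(f)_\infty=\sum n_jP_j$ provided by $\mathbf{n}\in H(P_1,\ldots,P_m)$ and compare $\sum n_jP_j$, via the equivalences (\ref{eq1}) and (\ref{eq2}), with the canonical effective representatives on the right of (\ref{eq7})--(\ref{eq8}). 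The congruence $n_j\equiv i\pmod b$ for $j\ge 2$ should be forced by the discrepancy property of $A$ (equivalently, by the minimality of $\mathbf{n}$ in each $\nabla_j(\mathbf{n})$): any competing residue would allow one to lower the $j$-th coordinate by multiplying $f$ by a suitable combination of the auxiliary functions whose divisors are listed in the proof of Proposition \ref{prop discrepancy} (those with divisors $bP_j-bP_1$ or $-aP_1+P_2+\cdots+P_{a+1}$), contradicting Lemma \ref{lemma discrepancy}. Writing $n_j=s_jb+i$ with $s_j\ge 0$, the identity $t+\sum_{j=2}^m s_j=a+1-m$ then follows by matching the total pole order (equivalently, the degree of $A$) with $(a+1-m)(b-i)$ on the right of (\ref{eq8}).

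I expect the residue identification $n_j\equiv i\pmod b$ to be the main obstacle: converting the abstract discrepancy/minimality condition into a sharp congruence requires a delicate use of the auxiliary functions built in the proof of Proposition \ref{prop discrepancy}. A possible alternative that sidesteps this subtlety is a cardinality argument: compute $|S_m|$ directly as a multi-sum over the admissible $(t,i,s_2,\ldots,s_m)$ and match it against an independent count of $|\Gamma(P_1,\ldots,P_m)|$ (available, for instance, when $\mathcal{X}$ carries a symmetry permuting the $P_j$'s), whereupon the already-established inclusion $S_m\subseteq\Gamma(P_1,\ldots,P_m)$ upgrades to equality.
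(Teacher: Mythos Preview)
Your forward inclusion $S_m\subseteq\Gamma(P_1,\ldots,P_m)$ is exactly the paper's argument: equivalence~(\ref{eq4}) gives a function with the right pole divisor, Corollary~\ref{corolary discrepancy} yields the discrepancy, and Lemma~\ref{lemma discrepancy} finishes.

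For the reverse inclusion your plan has a genuine gap, and the paper closes it by a different mechanism than the one you sketch. You extract $i$ from the representation $n_1=tb-ia$ and then hope to force $n_j\equiv i\pmod b$ for $j\ge 2$ by minimality, admitting yourself that this step is delicate; your degree-matching for $t+\sum s_j=a+1-m$ likewise presupposes that $\sum n_jP_j$ already sits in the equivalence class of (\ref{eq8}), which is precisely what remains to be shown. The paper avoids both difficulties. It writes each $n_\ell=s_\ell b+i_\ell$ with $0<i_\ell<b$ (possible since $b\in H(P_\ell)$), sets $i:=\min_{2\le\ell\le m} i_\ell$, and then \emph{builds} an explicit comparison element
\[
\mathbf{w}=\bigl(tb-ia,\,s_2b+i,\ldots,s_mb+i\bigr)\in S_m,\qquad t=a-\sum_{\ell=2}^{m}(s_\ell+1),
\]
as the pole-divisor tuple of the function $g^{\,b-i}\prod_{\ell=2}^{m}h_\ell^{\,s_\ell+1}$, where $(g)=P_2+\cdots+P_{a+1}-aP_1$ and $(h_\ell)=bP_1-bP_\ell$. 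By the first inclusion $\mathbf{w}\in\Gamma(P_1,\ldots,P_m)$, and by construction $w_k=s_kb+i=s_kb+i_k=n_k$ for the index $k$ achieving the minimum, so $\mathbf{w}\in\nabla_k(\mathbf{n})$. Since $\mathbf{n}$ and $\mathbf{w}$ are both minimal in $\nabla_k(\mathbf{n})$ (and indeed $w_\ell=s_\ell b+i\le s_\ell b+i_\ell=n_\ell$ for every $\ell\ge2$), they must coincide. Thus the congruences $n_j\equiv i\pmod b$ and the constraint $t+\sum s_j=a+1-m$ fall out \emph{a posteriori} from $\mathbf{n}=\mathbf{w}$, rather than being proved directly.

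In short: the key idea you are missing is to take $i$ from the coordinates $n_2,\ldots,n_m$ (as their common minimal residue mod $b$) rather than from $n_1$, manufacture a concrete $\mathbf{w}\in S_m$ sharing one coordinate with $\mathbf{n}$, and let minimality in $\nabla_k$ do the identification. Your proposed cardinality alternative is unnecessary once this construction is in hand.
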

\begin{proof}
By Corollary \ref{corolary discrepancy}, the divisor $(tb-ia) P_1 + \sum_{j=2}^m (s_j b + i) P_j$ is a discrepancy with respect to $P$ and $Q$ for any two distinct points $P,Q\in\{P_1,\ldots,P_m\}$. So, by Lemma \ref{lemma discrepancy}, follows that $S_m \subseteq \Gamma(P_1,\ldots,P_{m})$.

Next, we show that $\Gamma(P_1,\ldots,P_{m})\subseteq S_m$. Let $\mathbf{n} = (n_1 , \ldots , n_m)\in \Gamma(P_1,\ldots,P_{m})$. By Lemma \ref{mPoints},
$\mathbf{n} = (n_1, \ldots, n_m)\in G(P_1)\times G(P_2)\times\cdots\times G(P_m)$.

As $H(P_1)=\langle a,b \rangle$, from Lemma 1 in \cite{rosales} we have that $n_1=ab - i_1a- j_1b = (a - j_1)b - i_1 a$, for some $i_1,j_1 \in \mathbb{N}$. Let $\lambda=a-j_1$. Note that $0<j_1<a$ and $0<ai_1 < b\lambda$.

By Equation (\ref{eq2}), follows that $b \in H(P_{\ell})$, for $2 \leq \ell \leq m$. So, we have that $n_{\ell} = s_{\ell} b + i_{\ell}$, where $0 < i_{\ell} < b$ and $s_{\ell} \geq 0$.



Let $i=\min\{i_\ell: 2\leq \ell\leq m\}$. By Equation (\ref{eq1}), for each $\ell=2,\ldots,m$, there is a rational function $h_{\ell}$ such that $(h_{\ell}) = bP_1 - bP_{\ell}$. By Equation (\ref{eq1}), there is a rational function $g$ such that $(g)=P_2 + \cdots + P_{a+1} - a P_1$.

Let $\omega=g^{b-i} . \prod_{\ell=2}^{m} h_{\ell}^{s_{\ell} + 1}$. Then, $(\omega)_{\infty} = ((a-\sum_{\ell=2}^m (s_{\ell}+1))b-ia)P_1+ (s_2b+i)P_2 + \cdots + (s_mb+i) P_m$. Taking $t=a-\sum_{\ell=2}^m (s_{\ell}+1)$, by Corollary \ref{corolary discrepancy}, $(\omega)_{\infty}$ is a discrepancy with respect to $P$ and $Q$ for any two distinct points $P,Q\in\{P_1,\ldots,P_m\}$. So, by Lemma \ref{lemma discrepancy}, $\mathbf{w}=(tb-ia, s_2b+i, \ldots, s_mb+i) \in \Gamma(P_1, \ldots, P_m)$. Now, we know that $i=i_k$, for some $k \in \{ 2,\ldots,m\}$. Then, $\mathbf{w} \in \nabla_k ({\bf n})$ and by minimality of $\mathbf{w}$ and $\mathbf{n}$ follows that $\mathbf{w}=\mathbf{n}$ and $\Gamma(P_1,\ldots,P_{m}) \subseteq S_m$.







\end{proof}

\section{Examples}

\begin{example}
Let $\mathcal{X}_{q^{2r}}$ be the curve defined over $\mathbb{F}_{q^{2r}}$ by the affine equation
$$
y^q + y = x^{q^r + 1} ,
$$
where $r$ is an odd positive integer and $q$ is a prime power. Note that $\mathcal{X}_{q^{2}}$ is just the Hermitian curve. The curve $\mathcal{X}_{q^{2r}}$ has genus
$g=q^{r}(q-1)/2$, a single point at infinity, namely $P_1=(0:1:0)$, and others $q^{2r+1}$ rational points. It is important to observe that $\mathcal{X}_{q^{2r}}$ is a quotient of the Hermitian curve and thus is a maximal curve over $\mathbb{F}_{q^{2r}}$. The Weierstrass semigroup $H(P_1,P_2)$ was studied in \cite{st} and more details about this curve can be found in \cite{kondo}.

Let $y_{1}, \ldots, y_{q}$ be the solutions in $\mathbb{F}_{q^{2r}}$ to $y^q + y = 0$. Let $P_2=(0,y_1), P_{3}=(0,y_2), \ldots , P_{q+1}=(0,y_q)$. Since $(x)=P_2+\ldots+P_{q+1} - qP_1$ and $(y-y_j)=(q^{r}+1)P_{j+1} - (q^r + 1)P_1$, for all $j = 1,\ldots,q$, we have that

\begin{equation}\label{eq1 div kondo}
q P_1 \sim P_2 + \cdots + P_{q+1}\,,
\end{equation}

and

\begin{equation}\label{eq2 div kondo}
(q^r + 1) P_i \sim (q^r + 1) P_j \mbox{, for all } i,j\in \{1,2,\ldots,q+1\},
\end{equation}

For this curve, take $m=4$, $q=5$ and $r=3$ we have that $a=5$ and $b=126$. Then by Theorem (3.3) follows that $\Gamma(P_1,P_2,P_3,P_4)$ consists of the following 125 elements
$$
\begin{array}{llll}
(126,0,0,126)& + & i(-5,1,1,1)\;, & i=1,\ldots, 25\;, \\
(126,0,126,0)& + & i(-5,1,1,1)\;, & i=1,\ldots, 25\;, \\
(126,126,0,0)& + & i(-5,1,1,1)\;, & i=1,\ldots, 25\;, \\
(252,0,0,0) & + & i(-5,1,1,1)\;,  & i=1,\ldots, 50\;.
\end{array}
$$

\end{example}

\begin{example}
Let a Kummer extensions given by $y^b=g(x)=\prod\limits_{i=1}^{a}(x-\alpha_i)$ where
 $g(x)$ is a separable polynomial over $\mathbb{F}_q$ of degree $a$  and  $\gcd(a, b)=1$. We know that these curves has a single point $P_1$ at infinity and has genus $(b-1)(a-1)/2$, see \cite{kummer}. Then we have the following divisors in $F(\mathcal{X})$:

\begin{enumerate}
\item $(x-\alpha_i)=bP_i-bP_1$  for every $i$, $2\le i \le a+1$,
\item $(y)= P_2+\cdots + P_{a+1}-a P_1$,
\end{enumerate}
\end{example}

For $a=5$ and $b=7$ we have that

$$
\begin{array}{lcl}
\Gamma(P_1,P_2)& =& \{(23,1),(18,2),(13,3),(8,4),(3,5),(16,8),\\ & &(11,9),(6,10),(1,11),(9,15),(4,16),(2,22)\}\;.\\
& &\\
\Gamma(P_1,P_2,P_3)& =& \{(2,8,8),(2,15,1),(2,0,15),(9,8,1),(9,1,8),\\ & &(4,9,2),(4,2,9),(16,1,1),(11,2,2),(6,3,3),(1,4,4)\}\;.\\
& &\\
\Gamma(P_1,P_2,P_3,P_4)& =& \{(2,8,1,1),(2,1,8,1),(2,1,1,8),(9,1,1,1),(4,2,2,2)\}\;.\\
& &\\
\Gamma(P_1,P_2,P_3,P_4,P_5)& =& \{(2,1,1,1,1)\}\;.
\end{array}
$$

\section{Acknowledgment}

The authors would like to thank I. Duursma for very useful suggestions and comments that improved the results and the presentation of this work.

\end{document}